\newtheorem{theorem}{Theorem}[section]
\newtheorem{corollary}{Corollary}[section]
\newtheorem{Conjecture}{Conjecture}[section]
\newtheorem{open problem}{\sc \bf Open problem}[section]
\newtheorem{remark}{Remark}[section]
\DeclareMathOperator{\sech}{sech}
\numberwithin{equation}{section} \setlength{\oddsidemargin}{10pt}
\begin{document}

\title[Some results  associated with Bernoulli and Euler numbers with applications]{Some results  associated with Bernoulli and Euler numbers with applications}

\author[{ C.-P. Chen }]{ Chao-Ping Chen$^{*}$}

\address{C.-P. Chen: School of Mathematics and Informatics, Henan Polytechnic University, Jiaozuo City 454000, Henan Province, China}
 \email{chenchaoping@sohu.com}

\author[{ R.B. Paris }]{Richard B. Paris}

\address{R.B. Paris: Division of Computing and Mathematics\\
 University of Abertay, Dundee, DD1 1HG, UK}
 \email{R.Paris@abertay.ac.uk}

\thanks{*Corresponding Author}

\thanks{2010 Mathematics Subject Classification.  Primary 11B68; Secondary 26A48, 26D15}

\thanks{Key words and phrases. Bernoulli polynomials and numbers; Euler polynomials and numbers; Completely monotonic functions; Inequality}

\begin{abstract}
In this paper, we  present  series
representations of the remainders in the expansions for $2/(e^t+1)$, $\sech t$ and $\coth t$.
 For example,
we prove that for  $t > 0$ and $N\in\mathbb{N}:=\{1, 2, \ldots\}$,
\begin{align*}\label{Thm1-remainder-cosh-OpenProblemsolution1}
\sech t=\sum_{j=0}^{N-1}\frac{E_{2j}}{(2j)!}t^{2j}+R_N(t)
\end{align*}
with
\begin{align*}
R_N(t)=\frac{(-1)^{N}2t^{2N}}{\pi^{2N-1}}\sum_{k=0}^{\infty}\frac{(-1)^{k}}{(k+\frac{1}{2})^{2N-1}\Big(t^2+\pi^2(k+\frac{1}{2})^2\Big)},
\end{align*}
and
\begin{align*}
\sech t=\sum_{j=0}^{N-1}\frac{E_{2j}}{(2j)!}t^{2j}+\Theta(t,
N)\frac{E_{2N}}{(2N)!}t^{2N}
\end{align*}
with a suitable $0 < \Theta(t, N) < 1$. Here $E_n$ are the Euler numbers. By using the obtained
results, we deduce some inequalities and completely monotonic
functions associated with the ratio of gamma functions. Furthermore, we give a (presumably new) quadratic recurrence relation for the Bernoulli numbers.
\end{abstract}


\maketitle

\section{Introduction}
 The Bernoulli polynomials $B_n(x)$ and Euler polynomials
$E_n(x)$ are defined, respectively, by the generating functions:
\begin{equation*}\label{generalizedBernoullipolynomials}
\frac{te^{xt}}{e^{t}-1}=\sum_{n=0}^{\infty}B_{n}(x)\frac{t^n}{n!}\quad
(|t|<2\pi)\quad \text{and}\quad
\frac{2e^{xt}}{e^{t}+1}=\sum_{n=0}^{\infty}E_{n}(x)\frac{t^n}{n!}\quad
(|t|<\pi).
\end{equation*}
The  numbers $B_n = B_n(0)$ and $E_n = 2^{n}E_n(\frac{1}{2})$, which are
known to be rational numbers   and integers, respectively,  are
called Bernoulli and Euler numbers.

It follows from \cite[Chapter 4, Part I, Problem 154]{PS}
 that
\begin{equation}\label{Problem}
\sum_{j=1}^{2m}\frac{B_{2j}}{(2j)!}t^{2j}<\frac{t}{e^{t}-1}-1+\frac{t}{2}
<\sum_{j=1}^{2m+1}\frac{B_{2j}}{(2j)!}t^{2j}
\end{equation}
for $t>0$ and $m\in \mathbb{N}_0:=\mathbb{N}\cup \{0\}, \,
\mathbb{N}:=\{1,2,3,\ldots\}$.  The inequality \eqref{Problem} can
be also found in \cite{Koumandos1458,Sasvari}.
It is also known   \cite[p. 64]{Temme1996} that
\begin{equation}\label{remainder3}
\frac{t}{e^{t}-1}-1
+\frac{t}{2}=\sum_{j=1}^{n}\frac{B_{2j}}{(2j)!}t^{2j}+(-1)^{n}t^{2n+2}\nu_{n}(t)\quad
(n\in \mathbb{N}_0),
\end{equation}
 where
\begin{equation}\label{remainder3-series}
\nu_{n}(t)=\frac{2}{(2\pi)^{2n}}\sum_{k=1}^{\infty}\frac{1}{k^{2n}(t^{2}+4\pi^{2}k^{2})}.
\end{equation}
It is easily seen that \eqref{remainder3} implies \eqref{Problem}. Koumandos \cite{Koumandos1458}  gave the following  integral
representation of $\nu_{n}(t)$:
\begin{equation}\label{remainder-integral}
\nu_{n}(t)=\frac{(-1)^{n}}{(2n+ 1)!}\frac{
1}{e^t-1}\int_{0}^{1}e^{xt}B_{2n+1}(x)\textup{\,d}x.
\end{equation}

\begin{remark}
From  \eqref{remainder-integral}, it is possible to deduce
\eqref{remainder3-series} by making use of the expansion \textup{\cite[p. 592, Eq. (24.8.2)]{Olver-Lozier-Boisvert-Clarks2010}}
\begin{equation*}\label{E2m-1-known}\begin{split}
B_{2n+1}(x)=
\frac{(-1)^{n+1}2(2n+1)!}{(2\pi)^{2n+1}}\sum_{k=1}^{\infty}\frac{\sin(2k\pi
x)}{k^{2n+1}}\quad (n\in\mathbb{N}, \quad 0\leq x\leq1).
\end{split}\end{equation*}
We then obtain from \eqref{remainder-integral} that
\begin{align*}
&\nu_{n}(t)=-\frac{
1}{e^t-1}\frac{2}{(2\pi)^{2n+1}}\sum_{k=1}^{\infty}\int_{0}^{1}\frac{e^{xt}\sin(2k\pi x)}{k^{2n+1}}\textup{\,d}x=\frac{2}{(2\pi)^{2n}}\sum_{k=1}^{\infty}\frac{1}{k^{2n}(t^{2}+4\pi^{2}k^{2})}.
\end{align*}
An alternative derivation of \eqref{remainder3} and another integral representation of the remainder
function $\nu_{n}(t)$ are given in the appendix.
\end{remark}

Binet's first  formula  \cite[p. 16]{Srivastava2001} for the logarithm of  $\Gamma(x)$ states that
\begin{equation}\label{binet}
\ln \Gamma(x)= \left(x-\frac1{2}\right)\ln x-x+\ln\sqrt{2\pi}+\int_{0}^{\infty}\left(\frac{t}{e^{t}-1}-1+\frac{t}{2}\right)\frac{e^{-xt}}{t^2}\textup{\,d} t \quad (x>0).
\end{equation}
Combining \eqref{remainder3} with \eqref{binet}, Xu and Han
\cite{Xu-Han47--51}  deduced in 2009 that for every
$m\in\mathbb{N}_0$, the function
\begin{equation}\label{Rm-CLF}
R_{m}(x)=(-1)^{m}\left[\ln \Gamma(x)-\left(x-\frac{1}{2}\right)\ln
x+x-\ln
\sqrt{2\pi}-\sum_{j=1}^{m}\frac{B_{2j}}{2j(2j-1)x^{2j-1}}\right]
\end{equation}
is completely monotonic on $(0, \infty)$. Recall that a function $f(x)$
is said to be completely monotonic on an interval $I$ if it has
derivatives of all orders on $I$ and satisfies the following
inequality:
\begin{equation}\label{cmf-dfn-ineq}
(-1)^{n}f^{(n)}(x)\geq0\quad  (x\in I, \quad n\in \mathbb{N}_0).
\end{equation}
 For $m=0$, the complete
monotonicity property of $R_{m}(x)$ was proved by Muldoon
\cite{Muldoon54}.  Alzer \cite{Alzer373} first proved in 1997 that
$R_{m}(x)$ is completely monotonic on $(0, \infty)$. In 2006,
 Koumandos \cite{Koumandos1458} proved the double inequality \eqref{Problem},
 and then used  \eqref{Problem} and \eqref{binet} to give a simpler  proof of the complete monotonicity property of $R_{m}(x)$. In 2009,
Koumandos and Pedersen \cite[Theorem 2.1]{Koumandos-Pedersen33--40}
strengthened this result.

Chen and  Paris \cite[Lemma 1]{Chen-Paris514--529}
presented an analogous result to \eqref{Problem} given by
\begin{equation}\label{ThmEn-inequality}\begin{split}
\sum_{j=2}^{2m+1}\frac{(1-2^{2j})B_{2j}}{j}\frac{t^{2j-1}}{(2j-1)!}<\frac{2}{e^t+1}-1+\frac{t}{2}&<\sum_{j=2}^{2m}\frac{(1-2^{2j})B_{2j}}{j}\frac{t^{2j-1}}{(2j-1)!}
\end{split}\end{equation}
for $t>0$ and $m\in\mathbb{N}$. The inequality  \eqref{ThmEn-inequality} can  also be written for $t>0$ and $m\in\mathbb{N}_0$ as
\begin{equation}\label{ThmEn-inequalityre}\begin{split}
(-1)^{m+1}\left(\frac{2}{e^t+1}-1-\sum_{j=1}^{m}\frac{(1-2^{2j})B_{2j}}{j}\frac{t^{2j-1}}{(2j-1)!}\right)>0.
\end{split}\end{equation}
 Based on the  inequality \eqref{ThmEn-inequalityre}, Chen and  Paris \cite[Theorem 1]{Chen-Paris514--529}
 proved
that for every $m\in\mathbb{N}_0$, the function
\begin{equation}\label{En-Fmx}\begin{split}
F_{m}(x)=(-1)^{m}\left[\ln\left(\frac{\Gamma(x+1)}{\Gamma(x+\frac{1}{2})}\right)-\frac{1}{2}\ln
x-\sum_{j=1}^{m}\left(1-\frac{1}{2^{2j}}\right)\frac{B_{2j}}{j(2j-1)x^{2j-1}}\right]
\end{split}\end{equation}
is completely monotonic on $(0,\infty)$. This result is similar to
the complete monotonicity property of $R_{m}(x)$ in \eqref{Rm-CLF}. In analogy with  \eqref{remainder3}, these authors also considered \cite[Eq. (2.4)]{Chen-Paris514--529} the remainder $r_{m}(t)$ in the expansion
\begin{equation}\label{remainder-formula-r_m(x)}
\frac{2}{e^t+1}=1+\sum_{j=1}^{m}\frac{(1-2^{2j})B_{2j}}{j\cdot(2j-1)!}t^{2j-1}+r_{m}(t)
\end{equation}
and gave an integral representation for $r_{m}(t)$ when $t>0$.

Chen \cite{Chen790--799} proposed the following conjecture.
\begin{Conjecture}\label{Conjecture1-Gamma-prod}
For $t>0$ and $m\in\mathbb{N}_0$, let
\begin{align}\label{conjecture-Gamma-prod}
\mu_m(t)&=\frac{e^{t/3}-e^{2t/3}}{e^{t}-1}-\sum_{j=0}^{m}\frac{2B_{2j+1}(\frac{1}{3})}{(2j+1)!}t^{2j}
\end{align}
and
\begin{align}\label{conjecture-Gamma-prod-nu}
\nu_m(t)&=\frac{e^{t/4}-e^{3t/4}}{e^{t}-1}-\sum_{j=0}^{m}\frac{2B_{2j+1}(\frac{1}{4})}{(2j+1)!}t^{2j},
\end{align}
 where $B_n(x)$ denotes the Bernoulli polynomials.
Then,  for $t>0$  and $m\in\mathbb{N}_0$,
\begin{equation}\label{Conjecture1-Gamma-mu}
(-1)^{m}\mu_m(t)>0
\end{equation}
and
\begin{align}\label{Conjecture1-Gamma-nu}
(-1)^{m}\nu_m(t)>0.
\end{align}
\end{Conjecture}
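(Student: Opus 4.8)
The plan is to treat the two cases in parallel, since $x=\tfrac13$ and $x=\tfrac14$ enjoy the same symmetry. First I would simplify the non-polynomial part. Using the generating function $\frac{te^{xt}}{e^t-1}=\sum_{n\ge0}B_n(x)\frac{t^n}{n!}$ together with the reflection relation $B_n(1-x)=(-1)^nB_n(x)$, one gets for $x=\tfrac13$
\[
\frac{e^{t/3}-e^{2t/3}}{e^t-1}=\frac1t\sum_{n\ge0}\bigl(B_n(\tfrac13)-B_n(\tfrac23)\bigr)\frac{t^n}{n!}=2\sum_{j\ge0}\frac{B_{2j+1}(\frac13)}{(2j+1)!}\,t^{2j},
\]
because $B_n(\tfrac23)=(-1)^nB_n(\tfrac13)$ annihilates every even-order term. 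Hence $\mu_m(t)$ is exactly the tail $2\sum_{j\ge m+1}\frac{B_{2j+1}(1/3)}{(2j+1)!}t^{2j}$ of this (convergent for $|t|<2\pi$) series, and likewise $\nu_m(t)$ is the corresponding tail at $x=\tfrac14$.

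Next I would insert the Fourier expansion of the odd Bernoulli polynomials quoted in the Remark,
\[
B_{2j+1}(x)=\frac{(-1)^{j+1}2(2j+1)!}{(2\pi)^{2j+1}}\sum_{k=1}^\infty\frac{\sin(2\pi kx)}{k^{2j+1}}\qquad(j\ge1),
\]
which is valid for every index $j\ge1$ occurring in the tail, so the delicate endpoint behaviour at $j=0$ never enters. Substituting and interchanging the two summations turns the inner $j$-sum into a geometric series in $t^2/(4\pi^2k^2)$, which I can sum in closed form; after simplification this yields the single-series representations
\[
(-1)^m\mu_m(t)=\frac{2\,t^{2m+2}}{\pi(4\pi^2)^m}\sum_{k=1}^\infty\frac{\sin(2\pi k/3)}{k^{2m+1}\bigl(t^2+4\pi^2k^2\bigr)},\qquad (-1)^m\nu_m(t)=\frac{2\,t^{2m+2}}{\pi(4\pi^2)^m}\sum_{k=1}^\infty\frac{\sin(\pi k/2)}{k^{2m+1}\bigl(t^2+4\pi^2k^2\bigr)},
\]
which are exact analogues of the formula \eqref{remainder3-series} for $\nu_n(t)$. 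The prefactors are positive for $t>0$, so the whole problem reduces to showing that each of these two series is positive.

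Finally I would establish positivity by exploiting the periodicity of the sine factor. For $x=\tfrac13$ the value $\sin(2\pi k/3)$ vanishes when $3\mid k$, equals $+\tfrac{\sqrt3}{2}$ when $k\equiv1\pmod3$, and equals $-\tfrac{\sqrt3}{2}$ when $k\equiv2\pmod3$; grouping the terms $k=3\ell+1$ (positive) with $k=3\ell+2$ (negative) and using that $\frac{1}{k^{2m+1}(t^2+4\pi^2k^2)}$ is strictly decreasing in $k$ shows that every pair contributes a positive amount, whence the sum is positive. The case $x=\tfrac14$ is identical: $\sin(\pi k/2)$ is $+1$ on $k\equiv1$, is $0$ on $k\equiv0,2$, and is $-1$ on $k\equiv3\pmod4$, and pairing $k=4\ell+1$ with $k=4\ell+3$ gives the same conclusion. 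The one point that needs genuine care — and which I regard as the main obstacle — is the rigorous justification of the term-by-term interchange of the double sum (equivalently, a direct proof of the Mittag--Leffler expansion of $\frac{e^{t/3}-e^{2t/3}}{e^t-1}$ and control of its remainder); once the absolute convergence underpinning this step is secured, the sign analysis above completes the proof.
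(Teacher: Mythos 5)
Your approach is sound and genuinely different from what the paper does --- indeed it proves more than the paper does. The paper establishes only \eqref{Conjecture1-Gamma-nu}: in Remark \ref{Remark-completely-function-Vm} it uses the special identities $\frac{e^{t/4}-e^{3t/4}}{e^{t}-1}=-\frac{1}{2}\sech(t/4)$ and $B_{2j+1}(\frac{1}{4})=-(2j+1)E_{2j}/4^{2j+1}$ to show that $(-1)^m\nu_m(t)>0$ is \emph{equivalent} to the $\sech$ inequality \eqref{Euler-constantSm-inequality-ie}, which follows from Theorem \ref{Thm2-remainder-cosh-OpenProblem}. That reduction is special to $x=\frac{1}{4}$ and has no analogue at $x=\frac{1}{3}$, which is precisely why the $\mu_m$ half \eqref{Conjecture1-Gamma-mu} is left unproved in the paper. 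Your route --- Fourier expansion of $B_{2j+1}(x)$ in the tail, geometric resummation, and positivity by pairing $k\equiv 1$ with $k\equiv 2 \pmod 3$ (resp.\ $k\equiv 1$ with $k\equiv 3\pmod 4$) using the strict decrease of $k\mapsto 1/\bigl(k^{2m+1}(t^2+4\pi^2k^2)\bigr)$ --- treats both values of $x$ uniformly, and your closed forms are correct: for $x=\frac{1}{4}$, writing $k=2\kappa+1$ turns your series exactly into the remainder $R_{m+1}(t/4)$ of Theorem \ref{Thm2-remainder-cosh-OpenProblem}, constants included.

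The one step you must still close is not really the interchange of the double sum (absolute convergence handles that at once, since $t/(2\pi k)<1$ for all $k\geq 1$ when $t<2\pi$), but the range of validity: the tail representation of $\mu_m,\nu_m$ and the geometric resummation both require $|t|<2\pi$, so as written your identities are proved only on $(0,2\pi)$, while the conjecture concerns all $t>0$. Two clean repairs: (i) analytic continuation --- both sides of your identities are analytic on a complex neighbourhood of $(0,\infty)$ (the series converge locally uniformly away from the poles $t=\pm 2\pi i k$), so equality on $(0,2\pi)$ propagates to all $t>0$; or (ii) imitate the paper's own technique in Theorems \ref{Thm2-remainder-cosh-OpenProblem} and \ref{Thm3-remainder-coth}: start from the partial-fraction expansion
\begin{equation*}
\frac{e^{xt}-e^{(1-x)t}}{e^{t}-1}=-8\pi\sum_{k=1}^{\infty}\frac{k\sin(2\pi kx)}{t^{2}+4\pi^{2}k^{2}}\qquad (t>0,\ 0<x<1),
\end{equation*}
obtained from the Fourier series of $x\mapsto e^{xt}/(e^t-1)$ on $[0,1]$, and then apply the \emph{finite} identity \eqref{identity1/(1+z)} with $q=t^{2}/(4\pi^{2}k^{2})$, evaluating the polynomial coefficients through $\sum_{k}\sin(2\pi kx)/k^{2j+1}$; this yields your series representations for every $t>0$ with no continuation argument at all, exactly parallel to the paper's proofs. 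With either repair your argument is complete, and it settles both halves of the conjecture rather than only \eqref{Conjecture1-Gamma-nu}.
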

  Chen \cite[Lemma 1]{Chen790--799} has proved the statements in Conjecture
\ref{Conjecture1-Gamma-prod}  for $m=0, 1, 2$, and $3$.
He has also pointed out in \cite{Chen790--799} that, if Conjecture \ref{Conjecture1-Gamma-prod} is true,
then it follows that the functions
\begin{align}\label{Thm1-asymptotic-ratio-gammas-rewrittenfind}
U_m(x)=(-1)^{m}\left[\ln\frac{\Gamma(x+\frac{2}{3})}{x^{1/3}\Gamma(x+\frac{1}{3})}-
\sum_{j=1}^{m}\frac{B_{2j+1}(\frac{1}{3})}{j(2j+1)}\frac{1}{x^{2j}}\right]
\end{align}
and
\begin{align}\label{Thm1-asymptotic-ratio-gammas-rewrittenfind-Vx}
V_m(x)=(-1)^{m}\left[\ln\frac{\Gamma(x+\frac{3}{4})}{x^{1/2}\Gamma(x+\frac{1}{4})}-
\sum_{j=1}^{m}\frac{B_{2j+1}(\frac{1}{4})}{j(2j+1)}\frac{1}{x^{2j}}\right]
\end{align}
for $m\in\mathbb{N}_0$ are completely monotonic on $(0,\infty)$.
The complete monotonicity properties of $U_m(x)$ and $V_m(x)$ are
similar to the complete monotonicity property of $F_{m}(x)$ in
\eqref{En-Fmx}.

In this paper, we obtain the following results: (i) a series representation of the remainder $r_m(t)$ in \eqref{remainder-formula-r_m(x)} (Theorem \ref{Thm1-remainder}); (ii) a series representation of the remainder in the
expansion of $\sech t$ involving the Euler numbers (Theorem \ref{Thm2-remainder-cosh-OpenProblem}), together with the double
inequality for $t>0$ and $m\in\mathbb{N}_0$,
\begin{equation}\label{Euler-constantSm-inequality1}
\sum_{j=0}^{2m+1}\frac{E_{2j}}{(2j)!}t^{2j}<\sech t<\sum_{j=0}^{2m}\frac{E_{2j}}{(2j)!}t^{2j};
\end{equation}
(iii) the proof of the inequality \eqref{Conjecture1-Gamma-nu} for all $m\in\mathbb{N}_0$, and a demonstration that the
function $V_m(x)$ in \eqref{Thm1-asymptotic-ratio-gammas-rewrittenfind-Vx} is completely monotonic on $(0, \infty)$ (Remark \ref{Remark-completely-function-Vm}); (iv) a series
representation of the remainder in the expansion for $\coth t$ (Theorem \ref{Thm3-remainder-coth});  and finally, (v) a quadratic recurrence relation for the Bernoulli numbers (Theorem \ref{Thm4-remainder}).

\vskip 8mm

\section{Main results}
\begin{theorem}\label{Thm1-remainder}
For $t>0$ and $m\in\mathbb{N}$,
\begin{align}\label{Chen-remainder-rm}
\frac{2}{e^t+1}=1+\sum_{j=1}^{m}\frac{(1-2^{2j})B_{2j}}{j\cdot(2j-1)!}t^{2j-1}+(-1)^{m+1}t^{2m+1}s_m(t),
\end{align}
where  $s_m(t)$ is given by
\begin{align}\label{Chen-rm(x)}
s_m(t)=\frac{4}{\pi^{2m}}\sum_{k=0}^{\infty}\frac{1}{(2k+1)^{2m}\big(t^2+\pi^2(2k+1)^2\big)}.
\end{align}
\end{theorem}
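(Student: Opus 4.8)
The plan is to start from the closed form $\frac{2}{e^t+1}=1-\tanh(t/2)$ and to use the Mittag--Leffler (partial-fraction) expansion of the hyperbolic tangent. The function $2/(e^t+1)$ is meromorphic with simple poles exactly at the zeros of $e^t+1$, namely $t=\pm(2k+1)\pi i$ ($k\ge0$), which is consistent with the factors $t^2+\pi^2(2k+1)^2$ appearing in $s_m(t)$. Summing the principal parts (equivalently, invoking the standard expansion $\tanh z=\sum_{k\ge0}8z/(\pi^2(2k+1)^2+4z^2)$ with $z=t/2$) gives the master identity
\[
\frac{2}{e^t+1}=1-\sum_{k=0}^{\infty}\frac{4t}{t^2+\pi^2(2k+1)^2},
\]
valid for all $t>0$, with the series converging uniformly on compact subsets of the plane away from the poles.

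Next I would extract a Taylor polynomial from each summand by the \emph{exact} finite geometric identity
\[
\frac{1}{t^2+b^2}=\sum_{j=1}^{m}\frac{(-1)^{j-1}t^{2j-2}}{b^{2j}}+\frac{(-1)^{m}t^{2m}}{b^{2m}(t^2+b^2)},\qquad b=(2k+1)\pi,
\]
multiply by $4t$, and sum over $k$. Since the inner sum over $j$ is finite, interchanging it with the sum over $k$ is immediate, and it splits the master identity into a polynomial part plus a remainder. The polynomial coefficients are governed by $\sum_{k\ge0}(2k+1)^{-2j}=(1-2^{-2j})\zeta(2j)$, and feeding in $\zeta(2j)=(-1)^{j+1}(2\pi)^{2j}B_{2j}/(2(2j)!)$ collapses the coefficient of $t^{2j-1}$ to $(1-2^{2j})B_{2j}/(j(2j-1)!)$, once the powers of $2$, the signs, and the relation $(2j)!=2j\,(2j-1)!$ are reconciled. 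This reproduces exactly the finite sum in \eqref{Chen-remainder-rm}.

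The leftover terms assemble into
\[
-\,4(-1)^{m}t^{2m+1}\sum_{k=0}^{\infty}\frac{1}{\pi^{2m}(2k+1)^{2m}\big(t^2+\pi^2(2k+1)^2\big)}=(-1)^{m+1}t^{2m+1}s_m(t),
\]
which is precisely \eqref{Chen-rm(x)}; the factor $\pi^{-2m}$ comes from pulling $b^{2m}=\pi^{2m}(2k+1)^{2m}$ out of the denominator. The one step needing genuine care is the rigorous justification of the Mittag--Leffler expansion and the uniform convergence that legitimizes treating it termwise; alternatively, one can bypass this by starting from the integral representation of $r_m(t)$ already obtained by Chen and Paris and substituting the Fourier expansion of the relevant Bernoulli/Euler polynomial, exactly in the spirit of the Remark following \eqref{remainder-integral}. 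The remaining bookkeeping---tracking signs and powers of two---is routine once the master identity is in place.
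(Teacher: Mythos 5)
Your proof is correct, but it takes a genuinely different route from the paper's proof of Theorem \ref{Thm1-remainder}. The paper applies Boole's summation formula \eqref{Boole-summation-formula} to $f(x)=e^{xt}$, which produces the integral representation \eqref{Boole-summation-formula-re-obtainNew} of the remainder in terms of the Euler polynomial $E_{2m}(x)$, and then inserts the Fourier expansion \eqref{Euler-constant-Fourier-Expansion} and integrates term by term to reach the series \eqref{Chen-rm(x)}. You instead start from the partial-fraction (Mittag--Leffler) expansion of $\tanh(t/2)$, apply the finite geometric identity to each summand, and resum using $\sum_{k\geq0}(2k+1)^{-2j}=(1-2^{-2j})\zeta(2j)$ together with Euler's formula for $\zeta(2j)$; I have checked the bookkeeping, and the coefficient of $t^{2j-1}$ does collapse to $(1-2^{2j})B_{2j}/\big(j\,(2j-1)!\big)$, while the leftover terms assemble into exactly $(-1)^{m+1}t^{2m+1}s_m(t)$. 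Notably, your method is precisely the technique the paper itself uses for Theorems \ref{Thm2-remainder-cosh-OpenProblem} and \ref{Thm3-remainder-coth}, namely identity \eqref{identity1/(1+z)} applied termwise to a partial-fraction expansion, so your argument unifies all three theorems under one scheme; moreover, the one step you flag as needing care is easily discharged, since the expansion of $\tanh$ follows from the $\coth$ expansion \eqref{coth-expansion} already cited by the paper via $\tanh z=2\coth(2z)-\coth z$ (the even-indexed terms cancel), or can be cited from Whittaker--Watson directly. What the paper's route buys instead is the integral representation \eqref{Boole-summation-formula-re-obtainNew} of the remainder as a byproduct: this is what permits the sign analysis of $E_{2m}(x)$ in the subsequent remark and hence an independent derivation of inequality \eqref{ThmEn-inequality}, which your route does not produce. (Your fallback suggestion---starting from the Chen--Paris integral representation and substituting the Fourier expansion---is essentially the paper's actual proof, so you have in effect identified both routes.)
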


\begin{proof}
 Boole's summation formula (see \cite[p. 17, Theorem 1.4]{Temme1996}) for a function $f(t)$ defined on $[0, 1]$
with $k$ continuous derivatives states that, for $k\in\mathbb{N}$,
\begin{equation}\label{Boole-summation-formula}\begin{split}
f(1)=\frac{1}{2}\sum_{j=0}^{k-1}\frac{E_{j}(1)}{j!}\Big(f^{(j)}(1)+f^{(j)}(0)\Big)+\frac{1}{2(k-1)!}\int_{0}^{1}f^{(k)}(x)E_{k-1}(x){\rm
d}x.
\end{split}\end{equation}
 Noting \cite[p. 590]{Olver-Lozier-Boisvert-Clarks2010} that
\begin{equation}\label{noting-En}
E_{n}(1)=\frac{2(2^{n+1}-1)}{n+1}B_{n+1}\qquad (n\in\mathbb{N}),
\end{equation}
 we see that
\begin{equation*}\label{noting-En}\begin{split}
E_{2j-1}(1)=\frac{(2^{2j}-1)B_{2j}}{j}\quad\text{and}\quad
E_{2j}(1)=0\quad  (j\in\mathbb{N}).
\end{split}\end{equation*}

The choice\footnote{It is also possible to choose $k=2m$ in (\ref{Boole-summation-formula}) and to use the Fourier expansion for $E_{2m+1}(x)$ in  \cite[p.~16]{Temme1996} to obtain the same result.}
 $k=2m+1$ in \eqref{Boole-summation-formula} yields
\begin{align}\label{Boole-summation-formula-k=2m+1}
f(1)-f(0)&=\sum_{j=1}^{m}\frac{(2^{2j}-1)B_{2j}}{j\cdot(2j-1)!}\Big(f^{(2j-1)}(1)+f^{(2j-1)}(0)\Big)\nonumber\\
&\qquad\qquad\quad+\frac{1}{(2m)!}\int_{0}^{1}f^{(2m+1)}(x)E_{2m}(x){\rm
d}x.
\end{align}

Application of the above formula to $f(x)=e^{xt}$
then produces
\begin{equation}\label{Boole-summation-formula-re-obtainnew}\begin{split}
\frac{2}{e^t+1}=1+\sum_{j=1}^{m}\frac{(1-2^{2j})B_{2j}}{j\cdot(2j-1)!}t^{2j-1}+r_m(t),
\end{split}\end{equation}
where
\begin{equation}\label{Boole-summation-formula-re-obtainNew}\begin{split}
r_m(t)=-\frac{1}{e^t+1}\frac{t^{2m+1}}{(2m)!}\int_{0}^{1}e^{xt}E_{2m}(x){\rm
d}x.
\end{split}\end{equation}
 Using the following formula \textup{(}see \textup{\cite[p.
 16]{Temme1996}):}
\begin{equation}\label{Euler-constant-Fourier-Expansion}
E_{2m}(x)= (-1)^{m}\frac{4(2m)!}{\pi^{2m+1}}\sum_{k=0}^{\infty}\frac{\sin[(2k+1)\pi
x]}{(2k+1)^{2m+1}}\qquad (m\in\mathbb{N}, \quad 0\leq x\leq1),
\end{equation}
we obtain
\begin{align*}
r_m(t)&=\frac{(-1)^{m+1}}{e^t+1} \frac{4t^{2m+1}}{\pi^{2m+1}}\sum_{k=0}^{\infty}\int_{0}^{1}e^{xt}\frac{\sin[(2k+1)\pi x]}{(2k+1)^{2m+1}}{\rm d}x\nonumber\\
&=(-1)^{m+1}\frac{4t^{2m+1}}{\pi^{2m+1}}\sum_{k=0}^{\infty}\frac{1}{(2k+1)^{2m}\big(t^2+\pi^2(2k+1)^2\big)}.
\end{align*}
This completes the proof of Theorem \ref{Thm1-remainder}.
\end{proof}

\begin{remark}
From \eqref{Chen-remainder-rm} we retrieve
\eqref{ThmEn-inequalityre}.
\end{remark}

\begin{remark}
From  \textup{\cite[p. 592, Eq. (24.7.9)]{Olver-Lozier-Boisvert-Clarks2010}} and \textup{\cite[p. 43, Ex. 12(i)]{Wang-Guo1999}} we have
\begin{align*}
E_{2n}(x)=(-1)^n \sin(\pi x) \int_0^\infty \frac{4t^{2n} \cosh(\pi t)}{\cosh(2\pi t)-\cos(2\pi x)}\, {\rm d}t
\qquad (0<x<1,\quad n\in \mathbb{N}_0),
\end{align*}
from which it follows that
\begin{align*}
E_{4m}(x)>0\quad \mbox{and}\quad E_{4m+2}(x)<0\qquad (
0<x<1,\quad m\in \mathbb{N}_0).
\end{align*}
By combining these inequalities with \eqref{Boole-summation-formula-re-obtainnew}
and \eqref{Boole-summation-formula-re-obtainNew} we immediately
obtain \eqref{ThmEn-inequality}.
\end{remark}

\begin{corollary}\label{Thm2-remainder}
For $t>0$ and $m\in\mathbb{N}$,
\begin{align}\label{diff-rm(x)obtain}
(-1)^{m}\left(\frac{2e^t}{(e^t+1)^2}-\sum_{j=1}^{m}\frac{(2^{2j}-1)B_{2j}}{j\cdot(2j-2)!}t^{2j-2}\right)>0.
\end{align}
\end{corollary}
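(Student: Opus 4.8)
The plan is to recognize that the quantity in parentheses is, up to sign, a derivative of the expansion established in Theorem~\ref{Thm1-remainder}. Since
\[
\frac{d}{dt}\,\frac{2}{e^t+1}=-\frac{2e^t}{(e^t+1)^2},
\]
I would differentiate the identity \eqref{Chen-remainder-rm} term by term. Differentiating $t^{2j-1}$ brings down the factor $2j-1$, which cancels against $(2j-1)!$ to leave $(2j-2)!$, while the overall sign change converts $(1-2^{2j})$ into $(2^{2j}-1)$; the differentiated polynomial part is therefore exactly $\sum_{j=1}^{m}\frac{(2^{2j}-1)B_{2j}}{j\cdot(2j-2)!}\,t^{2j-2}$. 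After rearranging, this yields the clean reduction
\[
(-1)^{m}\left(\frac{2e^t}{(e^t+1)^2}-\sum_{j=1}^{m}\frac{(2^{2j}-1)B_{2j}}{j\cdot(2j-2)!}t^{2j-2}\right)=\frac{d}{dt}\big[t^{2m+1}s_m(t)\big],
\]
so the whole statement reduces to showing that $t^{2m+1}s_m(t)$ is strictly increasing on $(0,\infty)$.

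To establish that, I would first justify term-by-term differentiation of the series \eqref{Chen-rm(x)} for $t^{2m+1}s_m(t)$: on any compact subinterval of $(0,\infty)$ the $k$-th summand and its $t$-derivative are both $O((2k+1)^{-2m-2})$ uniformly, so the differentiated series converges uniformly and the interchange is legitimate. Writing $a_k=\pi(2k+1)$, differentiation of the generic term gives
\[
\frac{d}{dt}\,\frac{t^{2m+1}}{(2k+1)^{2m}(t^2+a_k^2)}=\frac{t^{2m}\big[(2m-1)t^2+(2m+1)a_k^2\big]}{(2k+1)^{2m}(t^2+a_k^2)^2}.
\]
Because $m\ge 1$, both coefficients $2m-1$ and $2m+1$ are positive, so for every $t>0$ each term of the differentiated series is strictly positive. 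Summing and multiplying by the positive constant $4/\pi^{2m}$ gives $\frac{d}{dt}[t^{2m+1}s_m(t)]>0$, which is precisely the claimed inequality.

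I expect no serious analytic obstacle here: the argument is elementary once the derivative of \eqref{Chen-remainder-rm} is taken. The two points requiring a moment's care are the bookkeeping of signs and factorials in that differentiation, and the (routine) justification of differentiating the series under the summation sign. The essential positivity input is simply that the numerator $(2m-1)t^2+(2m+1)a_k^2$ is manifestly positive, which holds exactly because $m\ge 1$.
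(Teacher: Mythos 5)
Your proposal is correct, and its first half coincides with the paper's argument: differentiate \eqref{Chen-remainder-rm}, check the sign/factorial bookkeeping, and reduce the claim to showing $\big(t^{2m+1}s_m(t)\big)'>0$ for $t>0$. Where you genuinely diverge is in how that positivity is proved. You differentiate the series \eqref{Chen-rm(x)} term by term (with a uniform-convergence justification on compact subintervals) and observe that each differentiated term, $t^{2m}\big[(2m-1)t^2+(2m+1)\pi^2(2k+1)^2\big]\big/\big[(2k+1)^{2m}(t^2+\pi^2(2k+1)^2)^2\big]$, is strictly positive precisely because $m\ge 1$. The paper instead avoids interchanging differentiation and summation altogether: it exploits the identity $t^2s_m(t)+s_{m-1}(t)=\tfrac{4}{\pi^{2m}}(1-2^{-2m})\zeta(2m)$ of \eqref{Chen-rm(x)-rm-1(x)}, notes that $s_{m-1}$ is strictly decreasing, concludes that $t^2s_m(t)$ is strictly increasing, hence that $t^{2m+1}s_m(t)=t^{2m-1}\cdot t^2s_m(t)$ is strictly increasing (again using $m\ge 1$), and reads off positivity of the derivative; differentiability of $t^{2m+1}s_m(t)$ comes for free from \eqref{Chen-remainder-rm}, since all other terms there are smooth. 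The trade-off: the paper's route is more structural and needs no series manipulation, but its last step is slightly loose as stated, since a strictly increasing differentiable function in general only has a \emph{nonnegative} derivative; your term-by-term computation delivers strict positivity of $\big(t^{2m+1}s_m(t)\big)'$ at every $t>0$ directly (as would the product rule applied to the paper's factorization). Both arguments use the hypothesis $m\ge 1$ in an essential way, consistent with the statement of the corollary.
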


\begin{proof}
Differentiating  the expression in \eqref{Chen-remainder-rm}, we find
\begin{align}\label{diff-rm(x)}
-\frac{2}{(e^t+1)^2}e^t=-\sum_{j=1}^{m}\frac{(2^{2j}-1)B_{2j}}{j\cdot(2j-2)!}t^{2j-2}+(-1)^{m+1}\big(t^{2m+1}s_m(t)\big)'.
\end{align}
It is easy to see that
\begin{align*}
t^2s_m(t)+s_{m-1}(t)=\frac{4}{\pi^{2m}}\sum_{k=0}^{\infty}\frac{1}{(2k+1)^{2m}}=\frac{4}{\pi^{2m}}(1-2^{-2m})\zeta(2m),
\end{align*}
 where $\zeta(z)$ is
the Riemann zeta function. This last expression can be written as
\begin{align}\label{Chen-rm(x)-rm-1(x)}
t^2s_m(t)=\frac{4}{\pi^{2m}}(1-2^{-2m})\zeta(2m)-s_{m-1}(t).
\end{align}
Then, since $s_m(t)$ is  strictly decreasing for $t>0$, we deduce from
\eqref{Chen-rm(x)-rm-1(x)} that $t^2s_m(t)$ is strictly increasing
for $t>0$. Hence,  $t^{2m+1}s_m(t)$ is strictly increasing for
$t>0$, and we then obtain from \eqref{diff-rm(x)} that
\begin{align*}
(-1)^{m}\left(\frac{2e^t}{(e^t+1)^2}-\sum_{j=1}^{m}\frac{(2^{2j}-1)B_{2j}}{j\cdot(2j-2)!}t^{2j-2}\right)=\big(t^{2m+1}s_m(t)\big)'>0
\end{align*}
for $t>0$ and $m\in\mathbb{N}$. The proof is complete.
\end{proof}


\begin{theorem}\label{Thm2-remainder-cosh-OpenProblem}
For  $t > 0$ and $N\in\mathbb{N}$, we have
\begin{align}\label{Thm1-remainder-cosh-OpenProblemsolution1}
\sech t=\sum_{j=0}^{N-1}\frac{E_{2j}}{(2j)!}t^{2j}+R_N(t)
\end{align}
with
\begin{align}
R_N(t)=\frac{(-1)^{N}2t^{2N}}{\pi^{2N-1}}\sum_{k=0}^{\infty}\frac{(-1)^{k}}{(k+\frac{1}{2})^{2N-1}\Big(t^2+\pi^2(k+\frac{1}{2})^2\Big)},
\end{align}
and
\begin{align}\label{Solution-open}
\sech t=\sum_{j=0}^{N-1}\frac{E_{2j}}{(2j)!}t^{2j}+\Theta(t,
N)\frac{E_{2N}}{(2N)!}t^{2N}
\end{align}
with a suitable $0 < \Theta(t, N) < 1$.
\end{theorem}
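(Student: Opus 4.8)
The plan is to read off the series representation of $R_N(t)$ from the partial-fraction (Mittag–Leffler) expansion of $\sech t$, and then to deduce the mean-value form \eqref{Solution-open} from the alternating structure of that series. Unlike Theorem~\ref{Thm1-remainder}, Boole's formula is not the natural vehicle here: applied to any exponential or hyperbolic $f$ it produces $\tanh t$ rather than $\sech t$, so I would switch to partial fractions.

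First I would start from the classical expansion
\begin{equation*}
\sech t=\pi\sum_{k=0}^{\infty}\frac{(-1)^{k}(2k+1)}{\pi^2(k+\frac12)^2+t^2}\qquad(t>0),
\end{equation*}
whose residues at the simple poles $t=\pm i\pi(k+\frac12)$ reproduce $\sech t$ and which can be confirmed by comparing Taylor coefficients at $t=0$. Writing $a_k=\pi(k+\frac12)$ and $c_k=\pi(-1)^k(2k+1)$, I would split each summand by the exact finite geometric identity
\begin{equation*}
\frac{1}{a_k^2+t^2}=\frac{1}{a_k^2}\sum_{j=0}^{N-1}(-1)^j\frac{t^{2j}}{a_k^{2j}}+\frac{(-1)^N t^{2N}}{a_k^{2N}(a_k^2+t^2)},
\end{equation*}
which carries no convergence issues since it is an algebraic identity. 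Summing over $k$ and interchanging the absolutely convergent sums yields
\begin{equation*}
\sech t=\sum_{j=0}^{N-1}(-1)^j t^{2j}\Big(\sum_{k=0}^{\infty}\frac{c_k}{a_k^{2j+2}}\Big)+R_N(t),
\end{equation*}
with
\begin{equation*}
R_N(t)=(-1)^N t^{2N}\sum_{k=0}^{\infty}\frac{c_k}{a_k^{2N}(a_k^2+t^2)}.
\end{equation*}
Substituting $c_k/a_k^{2N}=2(-1)^k\big/\big(\pi^{2N-1}(k+\frac12)^{2N-1}\big)$ turns the last sum into exactly the claimed expression for $R_N(t)$. To identify the polynomial part I would compare with $\sech t=\sum_{j\ge0}\frac{E_{2j}}{(2j)!}t^{2j}$: since $R_N(t)=O(t^{2N})$ as $t\to0$ and $N$ is arbitrary, uniqueness of Taylor coefficients forces $(-1)^j\sum_k c_k/a_k^{2j+2}=E_{2j}/(2j)!$, which is \eqref{Thm1-remainder-cosh-OpenProblemsolution1}.

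For the mean-value form I would first determine the sign of $R_N(t)$. The series $\sum_{k\ge0}(-1)^k b_k$ with $b_k=\big[(k+\frac12)^{2N-1}\big(t^2+\pi^2(k+\frac12)^2\big)\big]^{-1}$ is alternating with $b_k$ strictly decreasing in $k$ (both factors in the denominator increase), so by the alternating series test its value lies in $(0,b_0)$; hence $\operatorname{sign}R_N(t)=(-1)^N$ for every $t>0$ (which incidentally already yields the enveloping inequalities \eqref{Euler-constantSm-inequality1}). Next I would use the telescoping identity $R_N(t)-R_{N+1}(t)=\frac{E_{2N}}{(2N)!}t^{2N}$, immediate from \eqref{Thm1-remainder-cosh-OpenProblemsolution1} applied with $N$ and $N+1$. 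Since $R_N$ and $R_{N+1}$ carry the opposite signs $(-1)^N$ and $(-1)^{N+1}$, the difference $R_N-R_{N+1}$ has the same sign as $R_N$ and strictly larger modulus, so
\begin{equation*}
\Theta(t,N):=\frac{R_N(t)}{\frac{E_{2N}}{(2N)!}t^{2N}}=\frac{R_N(t)}{R_N(t)-R_{N+1}(t)}\in(0,1),
\end{equation*}
which is \eqref{Solution-open}; the nonvanishing $E_{2N}$ guarantees the ratio is well defined.

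I expect the main obstacle to lie in the first step: establishing (or cleanly citing) the partial-fraction expansion and justifying the interchange of the two summations together with the identification of the coefficient sums with $E_{2j}/(2j)!$. By contrast the second part is comparatively soft---once the alternating and monotone structure of the remainder series is in hand, the opposite-sign property of consecutive remainders delivers $0<\Theta<1$ almost immediately.
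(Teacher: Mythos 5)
Your proposal is correct, and for the series representation it is essentially the paper's own route: the paper likewise starts from the partial-fraction expansion \eqref{cosh-expansion} (cited from Whittaker--Watson) and splits each summand with the finite geometric identity \eqref{identity1/(1+z)}. There are two genuine differences. First, to identify the polynomial part the paper cites the Fourier expansion of $E_{2m}(x)$ at $x=\tfrac12$, i.e.\ the identity \eqref{Euler-constant-Fourier-Expansion-obtain}, whereas you match Taylor coefficients of $\sech t$ against a polynomial-plus-$O(t^{2N})$ decomposition; your version is self-contained and in fact re-derives \eqref{Euler-constant-Fourier-Expansion-obtain} as a byproduct, at the price of a short uniqueness-of-expansion argument. (One minor imprecision: for $j=0$ the coefficient series $\sum_k c_k/a_k^{2}$ converges only conditionally, so ``absolutely convergent'' is not the right justification for the interchange; but since the $j$-sum is finite, linearity plus convergence of each $k$-series suffices --- the paper makes the same move silently.) Second, for $0<\Theta(t,N)<1$ the paper writes $\Theta(t,N)=F(t)/F(0)$ with $F(t)=\sum_{k\geq 0}(-1)^k\alpha_k$, proves $F>0$ by the alternating-series test and $F'<0$ by the same test applied to the differentiated series, concluding $0<F(t)<F(0)$; you instead combine the sign of $R_N$ (obtained by the same alternating-series argument) with the telescoping identity $R_N(t)-R_{N+1}(t)=\frac{E_{2N}}{(2N)!}t^{2N}$ to get $\Theta=R_N/(R_N-R_{N+1})\in(0,1)$, using that consecutive remainders carry opposite signs. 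Your variant is softer: it avoids term-by-term differentiation of the series (a step whose justification the paper leaves implicit) and yields the sign of $E_{2N}$ as a by-product, while the paper's argument gives the extra information that $t\mapsto\Theta(t,N)$ is decreasing. Both are complete proofs of the theorem as stated.
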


\begin{proof}
It follows from \cite[p. 136]{Whittaker-Watson1966} (see also
\cite[p. 458, Eq. (27.3)]{Berndt1998}) that
\begin{equation*}\label{Euler-constantSm-inequality1}
\frac{\pi}{4\cosh\left(\frac{\pi
x}{2}\right)}=\sum_{k=0}^{\infty}\frac{(-1)^{k}(2k+1)}{(2k+1)^{2}+x^2},
\end{equation*}
which can be written as
\begin{equation}\label{cosh-expansion}
\sech t=\frac{4}{\pi}\sum_{k=0}^{\infty}\frac{(-1)^{k}}{(2k+1)\left(1+\left(\frac{2t}{\pi(2k+1)}\right)^2\right)}.
\end{equation}

Substitution of $x=\frac{1}{2}$ in \eqref{Euler-constant-Fourier-Expansion} leads to
\begin{equation}\label{Euler-constant-Fourier-Expansion-obtain}
\sum_{k=0}^{\infty}\frac{(-1)^k}{(2k+1)^{2j+1}}=\frac{(-1)^{j}\pi^{2j+1}}{2^{2j+2}(2j)!}\,E_{2j}.
\end{equation}
Using the identity
\begin{equation}\label{identity1/(1+z)}
\frac{1}{1+q}=\sum_{j=0}^{N-1}(-1)^{j}q^{j}+(-1)^{N}\frac{q^N}{1+q}\qquad
(q\not=-1)
\end{equation}
and \eqref{Euler-constant-Fourier-Expansion-obtain}, we obtain from
\eqref{cosh-expansion} that
\begin{align*}
\sech t&=\frac{4}{\pi}\sum_{k=0}^{\infty}\frac{(-1)^{k}}{(2k+1)}\left(\sum_{j=0}^{N-1}(-1)^{j}\left(\frac{2t}{\pi(2k+1)}\right)^{2j}+(-1)^{N}\frac{\left(\frac{2t}{\pi(2k+1)}\right)^{2N}}{1+\left(\frac{2t}{\pi(2k+1)}\right)^2}\right)\nonumber\\
&=\sum_{j=0}^{N-1}\frac{E_{2j}}{(2j)!}t^{2j}+R_N(t),
\end{align*}
with
\begin{align*}
R_N(t)=\frac{2}{\pi^{2N-1}}\sum_{k=0}^{\infty}\frac{(-1)^{N+k}}{(k+\frac{1}{2})^{2N-1}}\,\frac{t^{2N}}{\big(t^2+\pi^2(k+\frac{1}{2})^2\big)}.
\end{align*}

Noting that \eqref{Euler-constant-Fourier-Expansion-obtain} holds,
we find that $R_N(t)$ can be written as
\begin{align*}
R_N(t)=\Theta(t,N)\,\frac{E_{2N} t^{2N}}{(2N)!},\qquad \Theta(t,N):=\frac{F(t)}{F(0)},
\end{align*}
where
\begin{align*}
F(t):=\sum_{k=0}^\infty (-1)^k \alpha_k,\qquad \alpha_k:=\frac{1}{(k+\frac{1}{2})^{2N-1}}\,\frac{1}{t^2+\pi^2(k+\frac{1}{2})^2}.
\end{align*}
Then it is easily seen that $\alpha_{2k}>\alpha_{2k+1}$ for $k\in\mathbb{N}_0$, $t>0$ and $N\in\mathbb{N}$; thus $F(t)>0$ for $t>0$.
Differentiation yields
\begin{align*}
F'(t)=-2t\sum_{k=0}^{\infty}\frac{(-1)^{k}\alpha_k}{t^2+\pi^2(k+\frac{1}{2})^2}
\end{align*}
and a similar reasoning shows that $F'(t)<0$ for $t>0$.
Hence, for all $t > 0$ and $N\in\mathbb{N}$, we have $0 < F (t) < F(0)$ and thus $0 < \Theta(t,
N) < 1$. The proof of Theorem \ref{Thm2-remainder-cosh-OpenProblem} is complete.
\end{proof}

\begin{remark}
Recalling that
\begin{align*}
E_{4m} >0\quad \mbox{and}\quad  E_{4m+2}<0\qquad (  m\in
\mathbb{N}_0),
\end{align*}
we can deduce \eqref{Euler-constantSm-inequality1}
from \eqref{Solution-open}. Note that the inequality
\eqref{Euler-constantSm-inequality1} can also be written as
\begin{equation}\label{Euler-constantSm-inequality-ie}
(-1)^{m+1}\left(\sech
t-\sum_{j=0}^{m}\frac{E_{2j}}{(2j)!}t^{2j}\right)>0\qquad (t>0,
\,\, m\in\mathbb{N}_0).
\end{equation}
\end{remark}

\begin{remark}\label{Remark-completely-function-Vm}
It was shown  in \cite{Chen790--799} that
\eqref{conjecture-Gamma-prod-nu} can be written as
\begin{equation}
\nu_m(t)=-\frac{1}{2\cosh(\frac{t}{4})}+\sum_{j=0}^{m}\frac{E_{2j}}{2(2j)!}\left(\frac{t}{4}\right)^{2j}
\end{equation}
and  \eqref{Conjecture1-Gamma-nu} is equivalent to
\eqref{Euler-constantSm-inequality-ie}. Hence,  for $t>0$  and
$m\in\mathbb{N}_0$, \eqref{Conjecture1-Gamma-nu} holds true.

It was also shown in \cite{Chen790--799} that
\begin{align}\label{Vm-gammaRatio1/4}
V_{m}(x)&=(-1)^m\Bigg[\int_{0}^{\infty}\left(\frac{e^{t/4}-e^{3t/4}}{e^{t}-1}+\frac{1}{2}\right)\frac{e^{-xt}}{t}{\rm
d}t-\sum_{j=1}^{m}\frac{2B_{2j+1}(\frac{1}{4})}{(2j+1)!}\int_0^\infty
t^{2j-1}e^{-xt}{\rm d}
t\Bigg]\notag\\
&=\int_{0}^{\infty}(-1)^{m}\nu_m(t)\frac{e^{-xt}}{t}\textup{\,d}t.
\end{align}
We obtain from  \eqref{Vm-gammaRatio1/4}
 that for all $m\in\mathbb{N}_0$,
\begin{align*}
(-1)^{n}V_m^{(n)}(x)=\int_{0}^{\infty}(-1)^{m}\nu_m(t)t^{n-1}e^{-xt}\textup{\,d}t>0
\end{align*}
 for $x>0$ and $n\in\mathbb{N}_0$. Hence, the function
$V_m(x)$, defined by
\eqref{Thm1-asymptotic-ratio-gammas-rewrittenfind-Vx},   is
completely monotonic on $(0,\infty)$.
\end{remark}

Sondow and Hadjicostas \cite{Sondowa292--314} introduced and studied
the generalized-Euler-constant function $\gamma(z)$, defined by
 \begin{align}\label{gammaz}
\gamma(z)=\sum_{n=1}^{\infty}z^{n-1}\left(\frac{1}{n}-\ln
\frac{n+1}{n}\right),
 \end{align}
where the series converges when $|z|\leq 1$. Pilehrood and Pilehrood
 \cite{Pilehrood117--131} considered the function $z\gamma(z)$ ($|z|\leq1$). The function
$\gamma(z)$ generalizes both Euler's constant $\gamma(1)$ and the
alternating Euler constant $\ln \frac{4}{\pi} = \gamma(-1)$
\cite{Sondow61--65, Sondow2005}. An interesting comparison by  Sondow \cite{Sondow61--65} is the double integral and alternating series
\begin{equation}\label{Sondow-double-integral}\begin{split}
\ln\frac{4}{\pi}=\int_{0}^{1}\int_{0}^{1}\frac{x-1}{(1+xy)\ln(xy)}\textup{d}x\textup{d}y=\sum_{n=1}^{\infty}(-1)^{n-1}\left(\frac{1}{n}-\ln
\frac{n+1}{n}\right).
\end{split}\end{equation}

The formula \eqref{Vm-gammaRatio1/4} can provide integral
representations for the constant $\pi$. For example,
the choice $(x,m)=(1/4, 0)$ in \eqref{Vm-gammaRatio1/4}  yields
\begin{equation}\label{new-representationsPi2new}\begin{split}
\int_{0}^{\infty}\left(\frac{e^{t/4}-e^{3t/4}}{e^{t}-1}+\frac{1}{2}\right)\frac{2e^{-t/4}}{t}{\rm
d}t=\ln\frac{4}{\pi},
\end{split}\end{equation}
which provides a new  integral  representation for the
alternating Euler constant $\ln \frac{4}{\pi}$.
The choice $(x,m)=(3/4, 0)$ in \eqref{Vm-gammaRatio1/4}  yields
\begin{equation}\label{new-representationsPi2newnew}\begin{split}
\int_{0}^{\infty}\left(\frac{e^{t/4}-e^{3t/4}}{e^{t}-1}+\frac{1}{2}\right)\frac{2e^{-3t/4}}{t}{\rm
d}t=\ln\frac{\pi}{3}.
\end{split}\end{equation}
Many formulas  exist for the representation of $\pi$, and a
collection of these formulas is listed in
\cite{Sofo184--189,SofoJIPAM2005}. For more history of $\pi$ see
\cite{Beckmann1971, Berggren-Borwein-Borwein, Dunham1990}.

Noting \cite[Eq. (3.26)]{Chen790--799} that $B_{2n+1}(\tfrac{1}{4})$
can be expressed in terms of the Euler numbers
\begin{align}\label{B1/4-E2n}
B_{2n+1}(\tfrac{1}{4})=-\frac{(2n+1)E_{2n}}{4^{2n+1}}\qquad (n\in\mathbb{N}_0),
\end{align}
we find that \eqref{Thm1-asymptotic-ratio-gammas-rewrittenfind-Vx}
can be written as
\begin{align}\label{Thm1-asymptotic-ratio-gammas-rewrittenfindVxEuler-numbers}
V_m(x)=(-1)^{m}\left[\ln\frac{\Gamma(x+\frac{3}{4})}{x^{1/2}\Gamma(x+\frac{1}{4})}+
\sum_{j=1}^{m}\frac{E_{2j}}{j\cdot4^{2j+1}}\frac{1}{x^{2j}}\right].
\end{align}
From the inequalities $V_m(x)>0$  for $x>0$, we obtain the
following
\begin{corollary}\label{corollary1-Gamma-1/3-2/3-ratio-inequalityVx}
For $x>0$,
\begin{align}\label{Thm1-asymptotic-ratio-gammas-rewrittenfindVx}
x^{1/2}\exp\left(-\sum_{j=1}^{2m}\frac{E_{2j}}{j\cdot4^{2j+1}}\frac{1}{x^{2j}}\right)<\frac{\Gamma(x+\frac{3}{4})}{\Gamma(x+\frac{1}{4})}<
x^{1/2}\exp\left(-\sum_{j=1}^{2m+1}\frac{E_{2j}}{j\cdot4^{2j+1}}\frac{1}{x^{2j}}\right).
\end{align}
\end{corollary}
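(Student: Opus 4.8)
The plan is to obtain the two-sided estimate \eqref{Thm1-asymptotic-ratio-gammas-rewrittenfindVx} as an immediate consequence of the positivity of $V_m(x)$, reading off the lower and upper bounds from the even- and odd-index cases respectively. The only ingredients I need are the reformulation \eqref{Thm1-asymptotic-ratio-gammas-rewrittenfindVxEuler-numbers} of $V_m(x)$ in terms of the Euler numbers, together with the fact established in Remark~\ref{Remark-completely-function-Vm} that $V_m(x)$ is completely monotonic, and in particular strictly positive, on $(0,\infty)$ (the strictness follows because the integrand $(-1)^m\nu_m(t)$ in \eqref{Vm-gammaRatio1/4} is strictly positive for $t>0$).

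First I would treat the even index. Replacing $m$ by $2m$ in \eqref{Thm1-asymptotic-ratio-gammas-rewrittenfindVxEuler-numbers}, the prefactor equals $(-1)^{2m}=1$, so $V_{2m}(x)>0$ is equivalent to
\[
\ln\frac{\Gamma(x+\frac{3}{4})}{x^{1/2}\Gamma(x+\frac{1}{4})}>-\sum_{j=1}^{2m}\frac{E_{2j}}{j\cdot4^{2j+1}}\frac{1}{x^{2j}}.
\]
Since $\exp$ is increasing, exponentiating and then multiplying by $x^{1/2}>0$ produces exactly the left-hand inequality of \eqref{Thm1-asymptotic-ratio-gammas-rewrittenfindVx}.

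Next I would treat the odd index. Replacing $m$ by $2m+1$, the prefactor is now $(-1)^{2m+1}=-1$, so $V_{2m+1}(x)>0$ forces the bracketed quantity to be negative, i.e.
\[
\ln\frac{\Gamma(x+\frac{3}{4})}{x^{1/2}\Gamma(x+\frac{1}{4})}<-\sum_{j=1}^{2m+1}\frac{E_{2j}}{j\cdot4^{2j+1}}\frac{1}{x^{2j}}.
\]
The same exponentiation and multiplication by $x^{1/2}$ give the right-hand inequality, and the two bounds together constitute \eqref{Thm1-asymptotic-ratio-gammas-rewrittenfindVx}.

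I expect no genuine obstacle, since all the analytic content is contained in the complete monotonicity (hence strict positivity) of $V_m(x)$, which is already in hand. The only point demanding attention is the bookkeeping of the sign $(-1)^m$: it is precisely the flip in parity that converts the single positivity statement $V_m(x)>0$ into a sandwich, with the even-order truncations bounding $\Gamma(x+\frac{3}{4})/\Gamma(x+\frac{1}{4})$ from below and the odd-order truncations from above.
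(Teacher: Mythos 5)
Your proposal is correct and is exactly the paper's argument: the paper derives the corollary directly from the positivity of $V_m(x)$ (guaranteed by its complete monotonicity, established via the integral representation \eqref{Vm-gammaRatio1/4}) applied to the Euler-number form \eqref{Thm1-asymptotic-ratio-gammas-rewrittenfindVxEuler-numbers}, with the even index $2m$ giving the lower bound and the odd index $2m+1$ the upper bound. Your bookkeeping of the sign $(-1)^m$ and the exponentiation step match the intended proof precisely.
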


The problem of finding new and sharp inequalities for the gamma
function $\Gamma$ and, in particular, for the Wallis ratio
\begin{equation}\label{WallisRatio}\begin{split}
\frac{(2n-1)!!}{(2n)!!}=\frac{1}{\sqrt{\pi}}\frac{\Gamma(n+\frac{1}{2})}{\Gamma(n+1)}
 \end{split}\end{equation}
 has attracted the attention of many researchers
(see
\cite{Chen-Paris514--529,ChenQi397-401,Koumandos1365--1367,Lampret328--339,Lampret775--787,
Mortici425--433} and references therein).
 Here, we employ the special double
     factorial notation as follows:
    \begin{align*}
        &(2n)!!=2\cdot 4\cdot 6\cdots (2n)=2^n n!,\quad 0!!=1,\qquad (-1)!!=1,\\
        &(2n-1)!!=1\cdot 3\cdot 5\cdots
            (2n-1)=\pi^{-1/2}2^n\Gamma\left(n+\frac{1}{2}\right);
    \end{align*}
see \cite[p. 258]{abram}. For example, Chen and Qi \cite{ChenQi397-401} proved that for  $n\in\mathbb{N}$,
\begin{equation}
\label{walthmin}
\frac1{\sqrt{\pi\bigl(n+\frac{4}{\pi}-1\bigr)}}\le\frac{(2n-1)!!}{(2n)!!}<\frac1{\sqrt{\pi\bigl(n+\frac14\bigr)}},
\end{equation}
where the constants $\frac{4}{\pi}-1$ and $\frac14$ are the best
possible. This inequality is a consequence
of the complete monotonicity on  $(0, \infty)$ of the function (see \cite{Chen-Qi303--307})
\begin{equation}\label{WallisRatioVx}
V(x)=\frac{\Gamma(x+1)}{\sqrt{x+\frac{1}{4}}\,\Gamma(x+\frac{1}{2})}.
\end{equation}

If we write  \eqref{Thm1-asymptotic-ratio-gammas-rewrittenfindVx} as
\begin{align*}
\frac{1}{\sqrt{x}}\exp\left(\sum_{j=1}^{2m+1}\frac{E_{2j}}{j\cdot4^{2j+1}}\frac{1}{x^{2j}}\right)<\frac{\Gamma(x+\frac{1}{4})}{\Gamma(x+\frac{3}{4})}<
\frac{1}{\sqrt{x}}\exp\left(\sum_{j=1}^{2m}\frac{E_{2j}}{j\cdot4^{2j+1}}\frac{1}{x^{2j}}\right)
\end{align*}
and replace  $x$ by $x+\frac{1}{4}$, we find
\begin{align}\label{Thm1-asymptotic-ratio-gammas-rewrittenfindVxwrite-yields}
&\frac{1}{\sqrt{x+\frac{1}{4}}}\exp\left(\sum_{j=1}^{2m+1}\frac{E_{2j}}{j\cdot4^{2j+1}}\frac{1}{(x+\frac{1}{4})^{2j}}\right)<\frac{\Gamma(x+\frac{1}{2})}{\Gamma(x+1)}\nonumber\\
&\qquad\qquad\qquad\qquad\qquad\qquad\qquad<\frac{1}{\sqrt{x+\frac{1}{4}}}\exp\left(\sum_{j=1}^{2m}\frac{E_{2j}}{j\cdot4^{2j+1}}\frac{1}{(x+\frac{1}{4})^{2j}}\right).
\end{align}
Noting that \eqref{WallisRatio} holds, we then deduce from
\eqref{Thm1-asymptotic-ratio-gammas-rewrittenfindVxwrite-yields}
that
\begin{align}\label{Thm1-asymptotic-ratio-gammas-rewrittenfindVxwrite-yieldsWallis-ineq}
&\frac{1}{\sqrt{\pi(x+\frac{1}{4})}}\exp\left(\sum_{j=1}^{2m+1}\frac{E_{2j}}{j\cdot4^{2j+1}}\frac{1}{(x+\frac{1}{4})^{2j}}\right)<\frac{(2n-1)!!}{(2n)!!}\nonumber\\
&\qquad\qquad\qquad\qquad\qquad\qquad\qquad<\frac{1}{\sqrt{\pi(x+\frac{1}{4})}}\exp\left(\sum_{j=1}^{2m}\frac{E_{2j}}{j\cdot4^{2j+1}}\frac{1}{(x+\frac{1}{4})^{2j}}\right),
\end{align}
which generalizes  a recently published result of Chen \cite[Eq.
(3.40)]{Chen790--799}, who proved the inequality
\eqref{Thm1-asymptotic-ratio-gammas-rewrittenfindVxwrite-yieldsWallis-ineq}
for $m=1$.

\begin{theorem}\label{Thm3-remainder-coth}
For  $t > 0$ and $N\in\mathbb{N}_0$, we have
\begin{align}\label{Thm2-remainder-coth-expansion}
\coth t=\sum_{j=0}^{N}\frac{2^{2j}B_{2j}}{(2j)!}t^{2j-1}+\sigma_N(t),
\end{align}
where
\begin{align}\label{Thm2-remainder-coth-rN}
\sigma_N(t)=\frac{(-1)^{N}t^{2N+1}}{\pi^{2N}}\sum_{k=1}^{\infty}\frac{2}{k^{2N}(t^2+\pi^2k^2)},
\end{align}
and
\begin{align}\label{Thm2-remainder-coth-theta}
\coth t=\sum_{j=0}^{N}\frac{2^{2j}B_{2j}}{(2j)!}t^{2j-1}+\theta(t,
N)\frac{2^{2N+2}B_{2N+2}}{(2N+2)!}t^{2N+1}
\end{align}
with a suitable $0<\theta(t, N)<1$.
\end{theorem}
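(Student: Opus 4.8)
The plan is to mirror the proof of Theorem \ref{Thm2-remainder-cosh-OpenProblem}, replacing the Mittag--Leffler expansion of $\sech t$ by the classical partial-fraction expansion of the hyperbolic cotangent (see \cite[p.~136]{Whittaker-Watson1966}),
\begin{equation*}
\coth t=\frac{1}{t}+\sum_{k=1}^{\infty}\frac{2t}{t^2+\pi^2k^2}\qquad(t>0).
\end{equation*}
The advantage of starting from this expansion is that every term is positive, so the delicate sign bookkeeping needed in the $\sech$ case (the estimates $\alpha_{2k}>\alpha_{2k+1}$) will be unnecessary here. First I would rewrite each summand as $\dfrac{2t}{\pi^2k^2}\cdot\dfrac{1}{1+q}$ with $q=(t/(\pi k))^2$ and apply the algebraic identity \eqref{identity1/(1+z)}, splitting $\coth t$ into a polynomial part plus a remainder. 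The interchange of the finite sum over $j$ with the infinite sum over $k$, and the termwise integration/rearrangement, are justified by absolute convergence since all terms are nonnegative; this is the only analytic point to check and it is routine.

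Next I would evaluate the polynomial part. Summing the leading terms over $k$ produces
\begin{equation*}
\sum_{j=0}^{N-1}(-1)^{j}\frac{2t^{2j+1}}{\pi^{2j+2}}\,\zeta(2j+2),
\end{equation*}
and I would convert the zeta values using Euler's formula $\zeta(2n)=(-1)^{n+1}(2\pi)^{2n}B_{2n}/\big(2(2n)!\big)$ (see \cite{Olver-Lozier-Boisvert-Clarks2010}). With $n=j+1$ the signs cancel and, after the index shift $i=j+1$, the generic term becomes $\dfrac{2^{2i}B_{2i}}{(2i)!}t^{2i-1}$ for $i=1,\dots,N$; together with the isolated $1/t$ (which is exactly the $i=0$ term since $B_0=1$) this reproduces $\sum_{j=0}^{N}\frac{2^{2j}B_{2j}}{(2j)!}t^{2j-1}$ as required by \eqref{Thm2-remainder-coth-expansion}. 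The leftover piece from \eqref{identity1/(1+z)} simplifies directly to
\begin{equation*}
\sigma_N(t)=\frac{(-1)^{N}t^{2N+1}}{\pi^{2N}}\sum_{k=1}^{\infty}\frac{2}{k^{2N}(t^2+\pi^2k^2)},
\end{equation*}
which is \eqref{Thm2-remainder-coth-rN}; the main labour here is the elementary simplification $\dfrac{q^N}{1+q}\cdot\dfrac{1}{k^2}=\dfrac{t^{2N}}{\pi^{2N-2}k^{2N-2}(t^2+\pi^2k^2)}$ and the arithmetic of matching the coefficients.

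Finally, for the mean-value form \eqref{Thm2-remainder-coth-theta} I would set $G(t):=\sum_{k\ge1}k^{-2N}\big(t^2+\pi^2k^2\big)^{-1}$, so that $\sigma_N(t)=\dfrac{(-1)^{N}2t^{2N+1}}{\pi^{2N}}G(t)$. Evaluating the same representation at the ``next'' index shows that the displayed coefficient satisfies $\dfrac{2^{2N+2}B_{2N+2}}{(2N+2)!}t^{2N+1}=\dfrac{(-1)^{N}2t^{2N+1}}{\pi^{2N}}G(0)$, since $G(0)=\zeta(2N+2)/\pi^2$. Hence $\theta(t,N)=G(t)/G(0)$, and because each summand of $G$ is positive and strictly decreasing in $t$ for $t>0$, one has $0<G(t)<G(0)$ and therefore $0<\theta(t,N)<1$. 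I expect no genuine obstacle: the only places demanding care are the justification of termwise manipulation of the $\coth$ expansion and the sign/index bookkeeping in the passage from $\zeta(2j+2)$ to the Bernoulli coefficients; the positivity of $G$ makes the final $\theta$-bound immediate, in contrast with the alternating-series argument required for Theorem \ref{Thm2-remainder-cosh-OpenProblem}. The case $N=0$ should be checked separately, but it reduces to the partial-fraction expansion itself and is consistent with the stated formulas.
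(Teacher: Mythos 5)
Your proposal is correct and is essentially the paper's own proof: the same partial-fraction expansion of $\coth t$, the same splitting via the identity \eqref{identity1/(1+z)} combined with Euler's formula for $\zeta(2n)$ in terms of $B_{2n}$, and the same final argument writing $\theta(t,N)$ as the ratio $G(t)/G(0)$ (the paper's $f(t)/f(0)$) and using positivity and strict monotone decrease of $G$. The only blemish is the intermediate display $\frac{q^N}{1+q}\cdot\frac{1}{k^2}=\frac{t^{2N}}{\pi^{2N-2}k^{2N-2}(t^2+\pi^2k^2)}$, where the right-hand side should carry $k^{2N}$ (the stated expression equals $\frac{q^N}{1+q}$ alone); your final formula for $\sigma_N(t)$ is nevertheless the correct one.
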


\begin{proof}
It follows from \cite[p. 126, Eq.
(4.36.3)]{Olver-Lozier-Boisvert-Clarks2010} that
\begin{equation}\label{coth-expansion}
\coth
t=\frac{1}{t}+2t\sum_{k=1}^{\infty}\frac{1}{\pi^2k^2+t^2}=\frac{1}{t}+\frac{2t}{\pi^2}\sum_{k=1}^{\infty}\frac{1}{k^2\big(1+(\frac{t}{\pi
k})^2\big)}.
\end{equation}
It is well known   that
\begin{equation}\label{Zeta-2n}
\sum_{k=1}^{\infty}\frac{1}{k^{2j}}=\frac{(-1)^{j-1}(2\pi)^{2j}B_{2j}}{2(2j)!}.
\end{equation}
Using \eqref{identity1/(1+z)} and \eqref{Zeta-2n}, we obtain from
\eqref{coth-expansion} that
\begin{align*}
\coth
t&=\frac{1}{t}+2t\sum_{k=1}^{\infty}\frac{1}{k^2\pi^2}\left(\sum_{j=0}^{N-1}(-1)^{j}\left(\frac{t}{k\pi}\right)^{2j}+(-1)^{N}\frac{\left(\frac{t}{k\pi}\right)^{2N}}{1+\left(\frac{t}{k\pi}\right)^2}\right)\nonumber\\
&=\frac{1}{t}+\sum_{j=0}^{N-1}\frac{2^{2j+2}B_{2j+2}}{(2j+2)!}t^{2j+1}+\sigma_N(t)\\
&=\sum_{j=0}^{N}\frac{2^{2j}B_{2j}}{(2j)!}t^{2j-1}+\sigma_N(t)
\end{align*}
with
\begin{align*}
\sigma_N(t)=\frac{2(-1)^{N}}{\pi^{2N}}\sum_{k=1}^{\infty}\frac{t^{2N+1}}{k^{2N}(t^2+\pi^2k^2)}.
\end{align*}

Noting that \eqref{Zeta-2n} holds, we can rewrite $\sigma_N(t)$ as
\begin{align*}
\sigma_N(t)=\theta(t,N)\,\frac{2^{2N+2} B_{2N+2}}{(2N+2)!}\,t^{2N+1},
\end{align*}
where
\begin{align*}
\theta(t,N):=\frac{f(t)}{f(0)},\qquad f(t):=\sum_{k=1}^\infty \frac{1}{k^{2N}(t^2+\pi^2k^2)}.
\end{align*}
Obviously, $f(t)>0$ and is strictly decreasing for $t>0$. Hence, for all
$t > 0$, $0 < f (t) < f (0)$ and thus $0 < \theta(t, N) < 1$. The
proof of Theorem \ref{Thm3-remainder-coth} is complete.
\end{proof}

The following expansion for Barnes $G$-function
was established by Ferreira and L\'opez \cite[Theorem
1]{Ferreira298-314}. For $|\textup{arg}(z)|<\pi$,
\begin{equation*}\label{Ferreira-Lopez-Formula}\begin{split}
\ln G(z+1)&=\frac{1}{4}z^{2}+z\ln
\Gamma(z+1)-\left(\frac{1}{2}z^{2}+\frac{1}{2}z+\frac{1}{12}\right)\ln z-\ln A\\
&\quad+\sum_{k=1}^{N-1}\frac{B_{2k+2}}{2k(2k+1)(2k+2)z^{2k}}+\mathcal{R}_{N}(z)\qquad
(N\in\mathbb{N}),
\end{split}\end{equation*}
where $B_{2k+2}$ are the Bernoulli numbers and $A$ is the
Glaisher--Kinkelin constant defined by
\begin{equation}\label{An}
\ln A=\lim_{n \to \infty}\left\{\ln
\left(\prod_{k=1}^{n}k^{k}\right)-\left(\frac{n^{2}}{2}+\frac{n}{2}+\frac{1}{12}\right)\ln
n+\frac{n^{2}}{4}\right\},
\end{equation}
the numerical value of $A$ being $1.282427\ldots$. For $\Re(z)>0$, the remainder
$\mathcal{R}_{N}(z)$ is  given by
\begin{equation}\label{G-RN}
\mathcal{R}_{N}(z)=\int_{0}^{\infty}\left(\frac{t}{e^{t}-1}-\sum_{k=0}^{2N}\frac{B_{k}}{k!}t^{k}\right)\frac{e^{-zt}}{t^{3}}d
t.
\end{equation}
Estimates for $|\mathcal{R}_{N}(z)|$ were also obtained by Ferreira and L\'opez
\cite{Ferreira298-314}, showing that the expansion is indeed an
asymptotic expansion of $\ln G(z+1)$ in sectors of the complex plane
cut along the negative axis. Pedersen \cite[Theorem
1.1]{Pedersen171-178} proved that for any $N\geq 1$, the function $
x\mapsto (-1)^{N}\mathcal{R}_{N}(x)$ is completely monotonic on $(0, \infty)$.

Here, we present another proof of this complete monotonicity result.
From \eqref{Thm2-remainder-coth-expansion}, we obtain the following
inequality:
\begin{align*}
\sum_{j=0}^{2m}\frac{2^{2j}B_{2j}}{(2j)!}t^{2j-1}<\coth
t<\sum_{j=0}^{2m+1}\frac{2^{2j}B_{2j}}{(2j)!}t^{2j-1}\qquad (t>0,
\,\, m\in\mathbb{N}_0),
\end{align*}
which is equivalent to
\begin{align}\label{Thm2-remainder-coth-expansion-inequality-ie}
(-1)^{N}\left(\coth
t-\sum_{j=0}^{N}\frac{2^{2j}B_{2j}}{(2j)!}t^{2j-1}\right)>0\qquad
(t>0, \,\, N\in\mathbb{N}_0).
\end{align}
Replacement of $t$ by $t/2$ in
\eqref{Thm2-remainder-coth-expansion-inequality-ie} yields
\begin{align}\label{Thm2-remainder-coth-expansion-inequality-or}
(-1)^{N}\left(\frac{t}{2}\coth
\left(\frac{t}{2}\right)-\sum_{j=0}^{N}\frac{B_{2j}}{(2j)!}t^{2j}\right)>0\qquad
(t>0, \,\, N\in\mathbb{N}_0).
\end{align}
Accordingly, we obtain from \eqref{G-RN}  that the function
\begin{equation}\label{G-RN-x}
(-1)^{N}\mathcal{R}_{N}(x)=\int_{0}^{\infty}(-1)^{N}\left(\frac{t}{2}\coth\left(\frac{t}{2}\right)-\sum_{k=0}^{N}\frac{B_{2k}}{(2k)!}t^{2k}\right)\frac{e^{-xt}}{t^{3}}d
t
\end{equation}
is completely monotonic on $(0, \infty)$.

\begin{remark}\label{Remark-deducte-remainder3again}
From \eqref{Thm2-remainder-coth-expansion}, we can deduce
\eqref{remainder3}.  In fact, noting that
\begin{align*}
\coth t=\frac{e^t+e^{-t}}{e^t-e^{-t}}=1+\frac{2}{e^{2t}-1},
\end{align*}
we see that \eqref{Thm2-remainder-coth-expansion} can be written  as
\begin{align}\label{Thm2-remainder-coth-expansion-re}
x+\frac{2x}{e^{2x}-1}=\sum_{j=0}^{N}\frac{B_{2j}}{(2j)!}(2x)^{2j}+\frac{(-1)^{N}x^{2N+2}}{\pi^{2N}}\sum_{k=1}^{\infty}\frac{2}{k^{2N}(x^2+\pi^2k^2)}.
\end{align}
Replacement of $x$ by $t/2$ in
\eqref{Thm2-remainder-coth-expansion-re} yields \eqref{remainder3}.
\end{remark}

\vskip 8mm

\section{A quadratic recurrence relation for $B_n$}
Euler (see \cite[p. 595, Eq. (24.14.2)]{Olver-Lozier-Boisvert-Clarks2010} and \cite{Wikipedia-contributors-Bernoulli-number}) presented  a quadratic recurrence relation for the Bernoulli numbers:
\begin{align}\label{Euler-quadratic-recurrences-Bn}
\sum_{k=0}^{n}\binom{n}{k} B_{k}B_{n-k}=(1-n)B_n-nB_{n-1}\qquad (n\geq1),
\end{align}
which  is equivalent to
\begin{align}\label{Euler-quadratic-recurrences-Bn-equivalent}
\sum_{j=1}^{n-1}\binom{2n}{2j} B_{2j}B_{2n-2j}=-(2n+1)B_{2n} \qquad (n\geq2).
\end{align}
The relation \eqref{Euler-quadratic-recurrences-Bn-equivalent} can be used to show by induction that
\begin{align*}
(-1)^{n-1}B_{2n}>0 \quad \text{for all}\quad n\geq1,
\end{align*}
i.e., the even-index Bernoulli numbers have alternating signs. Other  quadratic recurrences for the Bernoulli numbers have been given
by Gosper (see \cite[Eq. (38)]{Weisstein-Bn}) as
\begin{align}\label{Gosper-quadratic-recurrences-Bn}
B_n=\frac{1}{1-n}\sum_{k=0}^n (1-2^{1-k})(1-2^{k-n+1})\binom{n}{k} B_{k}B_{n-k}
\end{align}
and by Matiyasevitch \cite{Matiyasevich1997}    (see also \cite{Wikipedia-contributors-Bernoulli-number}) as
\begin{align}\label{Matiyasevitch-quadratic-recurrences-Bn}
B_n=\frac{1}{n(n+1)}\sum_{k=2}^{n-2} \left\{n+2-2\binom{n+2}{k} \right\} B_{k}B_{n-k}\qquad (n\geq 4).
\end{align}

Here, we present a (presumably new)  quadratic recurrence relation for the Bernoulli numbers.
\begin{theorem}\label{Thm4-remainder}
The  Bernoulli numbers satisfy the following quadratic recurrence relation:
\begin{align}\label{Thm3-Result}
B_{n}=\frac{1}{2^{n}-1}\sum_{k=2}^{n-2}(1-2^k)\binom{n}{k}B_kB_{n-k}           \quad \quad (n\geq4).
\end{align}
\end{theorem}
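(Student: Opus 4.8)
The plan is to prove the recurrence by the method of generating functions: I will write down a single power-series identity, form a product of two Bernoulli-type generating functions, and compare coefficients of $t^n/n!$. The crucial auxiliary observation is that the weights $(1-2^k)$ appearing in \eqref{Thm3-Result} are precisely the Taylor coefficients of a function closely related to $t/(e^t-1)$. Replacing $t$ by $2t$ in $t/(e^t-1)=\sum_{n\ge0}B_nt^n/n!$ gives $2t/(e^{2t}-1)=\sum_{n\ge0}2^nB_nt^n/n!$, so that, using $e^{2t}-1=(e^t-1)(e^t+1)$,
\begin{equation*}
\sum_{n=0}^{\infty}(1-2^n)B_n\frac{t^n}{n!}=\frac{t}{e^t-1}-\frac{2t}{e^{2t}-1}=\frac{t}{e^t-1}\left(1-\frac{2}{e^t+1}\right)=\frac{t}{e^t+1}.
\end{equation*}
This identity is the backbone of the argument, since its coefficients $(1-2^n)B_n$ are exactly the objects in the sum to be established.

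Next I would multiply $t/(e^t+1)$ by the ordinary Bernoulli generating function $t/(e^t-1)$. On the one hand, the Cauchy product of the two series produces the coefficient $\sum_{k=0}^{n}\binom{n}{k}(1-2^k)B_kB_{n-k}$ of $t^n/n!$. On the other hand, the product simplifies in closed form: $\frac{t}{e^t+1}\cdot\frac{t}{e^t-1}=\frac{t^2}{e^{2t}-1}=\frac{t}{2}\cdot\frac{2t}{e^{2t}-1}=\sum_{m\ge0}2^{m-1}B_m\frac{t^{m+1}}{m!}$, whose coefficient of $t^n/n!$ is $n\,2^{n-2}B_{n-1}$. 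Equating the two expressions yields the single coefficient identity
\begin{equation*}
\sum_{k=0}^{n}\binom{n}{k}(1-2^k)B_kB_{n-k}=n\,2^{n-2}B_{n-1}\qquad(n\ge1).
\end{equation*}

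The final step is to isolate $B_n$ by peeling off the four boundary indices $k=0,1,n-1,n$ from the sum. The $k=0$ term vanishes because $1-2^0=0$; the $k=n$ term contributes $(1-2^n)B_n$; and, using $B_1=-\tfrac12$, the $k=1$ and $k=n-1$ terms together contribute $\tfrac{n}{2}B_{n-1}\bigl(1-(1-2^{n-1})\bigr)=n\,2^{n-2}B_{n-1}$. Substituting these into the coefficient identity, the contribution $n\,2^{n-2}B_{n-1}$ cancels against the right-hand side, leaving $\sum_{k=2}^{n-2}\binom{n}{k}(1-2^k)B_kB_{n-k}+(1-2^n)B_n=0$, which rearranges immediately to \eqref{Thm3-Result}; the hypothesis $n\ge4$ is exactly what guarantees the interior sum is non-empty and that the four boundary indices are distinct. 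I do not expect any genuine obstacle: everything reduces to the two closed-form evaluations above and elementary bookkeeping. The only mildly delicate point is tracking the boundary terms, and the pleasant feature of the computation is that the two $B_{n-1}$ contributions recombine so that the odd Bernoulli number disappears, leaving a clean quadratic relation among the $B_k$.
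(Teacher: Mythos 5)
Your proof is correct and follows essentially the same route as the paper: both arguments rest on the factorization $e^{2t}-1=(e^t-1)(e^t+1)$, form the Cauchy product of the ``$(1-2^k)B_k$'' series with the Bernoulli generating function, identify the product as a rescaled Bernoulli generating function, and then isolate $B_n$ by peeling off boundary terms. The only (cosmetic) differences are that you derive the expansion $\frac{t}{e^t+1}=\sum_{n\ge 0}(1-2^n)B_n\frac{t^n}{n!}$ directly, whereas the paper imports the corresponding expansion of $\frac{2}{e^{t/2}+1}$ from its Theorem \ref{Thm1-remainder}, and you compare coefficients against $\frac{t^2}{e^{2t}-1}$ rather than against $\frac{t}{e^t-1}$.
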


\begin{proof}
If we replace $t$ by $t/2$ in \eqref{Chen-remainder-rm}, we find
\begin{equation}\label{Write-Ej-as-bj}
\frac{2}{e^{t/2}+1}=1+\sum_{j=2}^{\infty}b_{j}t^{j-1}, \qquad b_j=\frac{2(1-2^{j})B_{j}}{2^{j-1}\cdot j!}.
\end{equation}
The Bernoulli numbers $B_n$ are defined by the generating function
\begin{equation}\label{generalizedBernoullinumbers}
\frac{t}{e^{t}-1}=\sum_{n=0}^{\infty}B_{n}\frac{t^n}{n!},
\end{equation}
which yields
\begin{equation}\label{Write-Bj-as}
\frac{t/2}{e^{t/2}-1}=\sum_{k=0}^{\infty}\frac{B_k t^k}{2^k k!}.
\end{equation}

It then follows from (\ref{Write-Ej-as-bj}) and (\ref{Write-Bj-as}) that
\begin{align*}
\frac{t}{e^{t}-1}&=\left(1+\sum_{j=2}^{\infty}b_{j}t^{j-1}\right)\sum_{k=0}^{\infty}\frac{B_k t^k}{2^{k} k!}\\
&=\sum_{k=0}^{\infty}\frac{B_k t^k}{2^{k} k!}+\sum_{j=2}^{\infty}b_jt^{j-1}\sum_{k=0}^{\infty}\frac{B_k t^k}{2^{k} k!}\\
&=\sum_{j=0}^{\infty}\frac{B_j t^j}{2^{j}
j!}+\sum_{j=1}^{\infty}\sum_{k=0}^{j-1}b_{k+2}\frac{B_{j-k-1} t^k}{2^{j-k-1}(j-k-1)!},
\end{align*}
that is
\begin{equation}\label{Chen-remainder-rmwriteas-t-2tthatisbj}
\frac{t}{e^{t}-1}=\sum_{j=0}^{\infty}\left(\frac{B_j}{2^{j}
j!}+\sum_{k=0}^{j-1}b_{k+2}\frac{B_{j-k-1}}{2^{j-k-1}(j-k-1)!}\right)t^j.
\end{equation}
Equating coefficients of equal powers of $t$ in
\eqref{generalizedBernoullinumbers} and
\eqref{Chen-remainder-rmwriteas-t-2tthatisbj}, we see that
\begin{align}\label{Chen-remainder-rmwriteas-t-2tthatisbjseeThm3}
\frac{B_j}{j!}=\frac{B_j}{2^{j}\cdot
j!}+\sum_{k=0}^{j-1}b_{k+2}\frac{B_{j-k-1}}{2^{j-k-1}\cdot(j-k-1)!}\qquad
(j\in\mathbb{N}_0).
\end{align}
Substitution of the coefficients $b_j$ in \eqref{Write-Ej-as-bj} into
\eqref{Chen-remainder-rmwriteas-t-2tthatisbjseeThm3} then yields
\begin{align}\label{Chen-remainder-rmwriteas-t-2tthatisbjresultThm3}
B_j=\frac{j!}{2^{j}-1}\sum_{k=0}^{j-1}\frac{2(1-2^{k+2})B_{k+2}B_{j-k-1}}{(k+2)!\cdot
(j-k-1)!}\qquad (j\in \mathbb{N}).
\end{align}
It is easy to see that
\begin{align*}
B_j&=\frac{j!}{2^{j}-1}\left(\sum_{k=0}^{j-3}\frac{2(1-2^{k+2})B_{k+2}B_{j-k-1}}{(k+2)!\cdot
(j-k-1)!}-\frac{(1-2^j)B_j}{j!}+\frac{2(1-2^{j+1})B_{j+1}}{(j+1)!}\right)\\
&=\frac{j!}{2^{j}-1}\sum_{k=0}^{j-3}\frac{2(1-2^{k+2})B_{k+2}B_{j-k-1}}{(k+2)!\cdot
(j-k-1)!}+B_j+\frac{2(1-2^{j+1})B_{j+1}}{(2^{j}-1)(j+1)}.
\end{align*}
We therefore obtain
\begin{align*}
B_{j+1}=\frac{1}{2^{j+1}-1}\sum_{k=0}^{j-3}(1-2^{k+2})\binom{j+1}{k+2}B_{k+2}B_{j-k-1}           \quad \quad (n\in \mathbb{N}\setminus \{1, 2\}),
\end{align*}
which, upon replacing $j$ by $n-1$ and $k$ by $k-2$, yields \eqref{Thm3-Result}. This completes the proof of
Theorem \ref{Thm4-remainder}.
\end{proof}

\vskip 4mm
\begin{center}
{ Appendix:  An alternative proof of \eqref{remainder3}}
\end{center}
\setcounter{section}{1}
\setcounter{equation}{0}
\renewcommand{\theequation}{\Alph{section}.\arabic{equation}}
Euler's summation formula states that, for $k\in\mathbb{N}$, (see \cite[p.~9]{Temme1996})
\begin{align}\label{Euler-summation-formula}
f(1)&=\int_{0}^{1}f(x)\,{\rm d}
x+\sum_{j=1}^{k}\frac{(-1)^{j}B_{j}}{j!}\Big(f^{(j-1)}(1)-f^{(j-1)}(0)\Big)\nonumber\\
&\quad+\frac{(-1)^{k+1}}{k!}\int_{0}^{1}f^{(k)}(x)B_{k}(x)\,{\rm d}x.
\end{align}
 The choice $k=2n$ in \eqref{Euler-summation-formula} yields
\[\frac{f(1)+f(0)}{2}=\int_{0}^{1}f(x)\,{\rm d}
x+\sum_{j=1}^{n}\frac{B_{2j}}{(2j)!}\Big(f^{(2j-1)}(1)-f^{(2j-1)}(0)\Big)\hspace{3cm}\]
\[\hspace{5cm}-\frac{1}{(2n)!}\int_{0}^{1}f^{(2n)}(x)B_{2n}(x)\,{\rm d}x.\]
Application of this formula to $f(x)=e^{xt}$ then yields
\[\frac{t}{e^t-1}=1-\frac{t}{2}+\sum_{j=1}^{n}\frac{B_{2j}}{(2j)!}t^{2j}-\frac{t}{e^t-1}\frac{t^{2n}}{(2n)!}\int_{0}^{1}e^{xt}B_{2n}(x)\,{\rm d}x.\]

Using the following formula (see \cite[p.
 5]{Temme1996})
 \begin{equation*}
B_{2n}(x)= 2(-1)^{n+1}(2n)!\sum_{k=1}^{\infty}\frac{\cos(2k\pi
x)}{(2k\pi)^{2n}}\qquad (n\in\mathbb{N},\quad  0\leq x\leq1),
\end{equation*}
we have
\begin{align}\label{E2m-known-wehave}
\frac{t}{e^t-1}-\left(1-\frac{t}{2}+\sum_{j=1}^{n}\frac{B_{2j}}{(2j)!}t^{2j}\right)&=-\frac{t}{e^t-1}\frac{t^{2n}}{(2n)!}\int_{0}^{1}e^{xt}B_{2n}(x)\,{\rm d}x\nonumber\\
&=\frac{(-1)^{n}2t^{2n+1}}{e^t-1}\sum_{k=1}^{\infty}\int_{0}^{1}e^{xt}\frac{\cos (2k\pi
x)}{(2k\pi)^{2n}}\,{\rm d}x\nonumber\\
&=\frac{(-1)^{n}2t^{2n+2}}{(2\pi)^{2n}}\sum_{k=1}^{\infty}\frac{1}{k^{2n}(t^2+4\pi^2k^2)}\nonumber\\
&=(-1)^{n}t^{2n+2}\nu_{n}(t).
\end{align}
This gives another derivation of \eqref{remainder3}.

We obtain from \eqref{E2m-known-wehave} that
\begin{equation}\label{remainder-integral-Chen}
\nu_{n}(t)=\frac{(-1)^{n-1}}{t(e^t-1)\cdot(2n)!}\int_{0}^{1}e^{xt}B_{2n}(x)\,{\rm d}x,
\end{equation}
which provides an alternative integral representation of $\nu_{n}(t)$.

 \vskip 8mm


\enddocument